\documentclass[11pt]{article}
\usepackage{amssymb,amsmath,amsthm}
\usepackage{mathtools}
\usepackage{mathrsfs}
\usepackage{enumerate}
\usepackage{tikz}
\usepackage[margin=1in]{geometry}
\usepackage[backend=biber,style=alphabetic]{biblatex}
\usepackage{hyperref}
\usepackage[nameinlink,noabbrev]{cleveref}
\usetikzlibrary{positioning}
\usetikzlibrary{calc}

\newtheorem{theorem}{Theorem}[section]
\newtheorem{lemma}[theorem]{Lemma}

\newtheorem{corollary}[theorem]{Corollary}

\theoremstyle{definition}
\newtheorem{definition}[theorem]{Definition}
\newtheorem{remark}[theorem]{Remark}

\newtheorem{claim}{Claim}

\newcommand{\sqbinom}[2]{\genfrac{[}{]}{0pt}{}{#1}{#2}}

\title{A $q$-analogue of a binomial supercongruence}

\author{Oscar Gonzalez}

\date{\today}

\begin{document}

\maketitle

\begin{abstract}
In a recent paper \cite{ichino2025representationsmathrmgl2mathbbzpnmathbbzsupercongruences}, Ichino and 
Prasanna proved a supercongruence for binomial coefficients 
and related it to representations of $\mathrm{GL}_2$ over 
$p$-adic rings. We prove a $q$-analog of this supercongruence using the $q$-Lucas theorem, from which a strengthening of the main result of loc. cit. can easily be deduced. Our proof also substantially simplifies the original argument of \cite{ichino2025representationsmathrmgl2mathbbzpnmathbbzsupercongruences}.

\end{abstract}

\section{Introduction}
We will generalize the following theorem. 
\begin{theorem}\cite{ichino2025representationsmathrmgl2mathbbzpnmathbbzsupercongruences} Fix a prime $p>2$ and two non-negative integers $j$ and $n$. For any non-negative integer $r$ with binary expansion 
\begin{equation} r = \sum_{j=0}^\infty r_j 2^j, \quad r_j\in \{ 0, 1 \} \nonumber 
\end{equation}
let $k_r$ be defined by
    \begin{equation}k_r := (p-1)\sum_{j=0}^\infty r_jp^j.\nonumber \end{equation}
Then for any non-negative integer $i$, we have:
    \begin{equation}\label{ak} \sum_{r=0}^{2^n-1}(-1)^r{{k_r/2}\choose{i}}{{-1/2-i}\choose{k_r/2}}\equiv 0\mod p^n.\end{equation}
\end{theorem}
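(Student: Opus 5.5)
The plan is to follow the strategy announced in the abstract: replace the two binomial coefficients by Gaussian binomials $\sqbinom{\cdot}{\cdot}_q$, obtain a polynomial identity, and prove divisibility by cyclotomic polynomials.

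\textbf{Setup.} Write $m_r:=k_r/2=\frac{p-1}{2}\sum_j r_jp^j$. This is an integer whose base-$p$ digits are all $0$ or $\frac{p-1}{2}$. The digit pattern of $m_r$ is exactly the binary pattern of $r$, and this is what makes a Lucas-type argument plausible.

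Next, make the upper index $-1/2-i$ integral. Modulo $p^n$ (indeed modulo $p^N$ for any $N\ge n$) we have $-1/2\equiv (p^N-1)/2$. So I would replace ${-1/2-i\choose m}$ by ${(p^N-1)/2-i\choose m}$. This requires checking that the two binomials agree modulo $p^n$. That is a standard $p$-adic continuity statement for $x\mapsto{x\choose m}$ with $m<p^n$; one could also sidestep it by rewriting through ${-1/2\choose a}=(-1/4)^a{2a\choose a}$.

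The $q$-analogue I would then study is
\[
S_n(q)=\sum_{r=0}^{2^n-1}(-1)^r q^{e(r)}\sqbinom{m_r}{i}_q\sqbinom{(p^N-1)/2-i}{m_r}_q ,
\]
where $q^{e(r)}$ is a normalising power to be fixed later so that the twisted pairings below cancel exactly. The goal is to show
\[
\Phi_p(q)\Phi_{p^2}(q)\cdots\Phi_{p^n}(q)\ \text{ divides }\ S_n(q).
\]
Since $\Phi_{p^j}(1)=p$, setting $q=1$ then yields \eqref{ak}.

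\textbf{Divisibility by $\Phi_{p^j}$.} Fix $1\le j\le n$ and evaluate at a primitive $p^j$-th root of unity $\zeta$. By the $q$-Lucas theorem,
\[
\sqbinom{a}{b}_\zeta=\sqbinom{a_0}{b_0}_\zeta{a_1\choose b_1},
\]
where $a=a_1p^j+a_0$ and $b=b_1p^j+b_0$ with $0\le a_0,b_0<p^j$. Pair each $r$ with $r'=r\oplus 2^{j-1}$, i.e. flip binary digit $j-1$. This changes the $p$-digit of $m_r$ in position $j-1$ from $0$ to $\frac{p-1}{2}$ and leaves everything else fixed, and the two members of the pair carry opposite signs $(-1)^r$. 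Under the $q$-Lucas decomposition at level $p^j$, the upper and lower parts of $m_r$ and $m_{r'}$ coincide above position $j-1$. The top digits of $(p^N-1)/2$ are all $\frac{p-1}{2}$, so the ordinary-binomial "quotient" factors agree for $r$ and $r'$.

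It remains to show that the residue factors $\sqbinom{\cdot}{\cdot}_\zeta$ agree, up to the chosen power $q^{e(r)}$. The residues of $m_r$ and $m_{r'}$ differ by $\frac{p-1}{2}p^{j-1}$, which is exactly half of $p^{j-1}(p-1)$, and $\zeta^{p^{j-1}}$ is a primitive $p$-th root of unity. I would use the symmetry $\sqbinom{A}{B}_\zeta=\pm\zeta^{(\cdot)}\sqbinom{A}{A-B}_\zeta$ together with the fact that the residue of $(p^N-1)/2-i$ at level $p^j$ is $(p^j-1)/2-i_0$. The aim is to show that $m\mapsto \sqbinom{m}{i_0}_\zeta\sqbinom{(p^j-1)/2-i_0}{m}_\zeta$ takes the same value, up to a power of $\zeta$, at the two paired arguments, or else that both values vanish. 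Given this, each pair cancels and $\Phi_{p^j}\mid S_n$. Since the $\Phi_{p^j}$ are pairwise coprime, their product divides $S_n$.

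\textbf{Main obstacle.} I expect the difficulty to be this residue-level comparison and the choice of normalisation $e(r)$. The cancellation works cleanly only when the lower residue forces a vanishing Gaussian binomial, or when a reflection $m\mapsto (p^j-1)/2-i_0-m$ relates the two terms. The first thing I would try is to handle the cases $i_0\le$ residue and $i_0>$ residue separately. If exact cancellation fails, my fallback is to use the reflection symmetry to show that the pairs cancel after summing over the lower digits $r_0,\dots,r_{j-2}$ together, rather than pair by pair.
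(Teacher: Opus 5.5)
Your overall architecture (integral upper index $(p^N-1)/2-i$, divisibility of the $q$-sum by each $\Phi_{p^j}$ via $q$-Lucas, then $q=1$ and a $p$-adic limit) is the paper's. The pairing at its core, however, fails for $j\ge 2$. Flipping binary digit $j-1$ changes $r$ by $\pm 2^{j-1}$, which is even once $j\ge 2$. So $r$ and $r\oplus 2^{j-1}$ carry the \emph{same} sign $(-1)^r$, not opposite ones.

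No normalisation $q^{e(r)}$ can rescue this. Take $p=3$, $j=n=2$, $i=0$: then $m_r\in\{0,1,3,4\}$ and $(p^2-1)/2=4$. Your pair $r=0$, $r'=2$ contributes $q^{e(0)}\sqbinom{4}{0}_q+q^{e(2)}\sqbinom{4}{3}_q=q^{e(0)}+q^{e(2)}[4]_q$. This does not vanish at $\zeta=e^{2\pi\sqrt{-1}/9}$, because $|[4]_\zeta|=\sin(4\pi/9)/\sin(\pi/9)\approx 2.88\neq 1$. The full sum $1-[4]_q+[4]_q-1$ is zero, but through a different matching. Your pairing is correct only for $j=1$, where flipping the single bottom digit happens to be the right involution.

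The missing idea is to pair $r$ with $r\oplus(2^j-1)$, i.e.\ flip \emph{all} of the bottom $j$ binary digits; this is the paper's $D_m$ with $m=j$. It has three properties:
\begin{enumerate}
\item It flips bit $0$, so the two signs are opposite.
\item It leaves the base-$p$ digits of $m_r$ in positions $\ge j$ unchanged, so the ordinary-binomial factors from $q$-Lucas agree.
\item It sends the residue $x$ of $m_r$ modulo $p^j$ to $A-x$, where $A=(p^j-1)/2$.
\end{enumerate}
Now compare the residue factors. If $i_0\le A$, the factor $\sqbinom{x}{i_0}_q\sqbinom{A-i_0}{x}_q$ is exactly invariant under $x\mapsto A-x$, by the polynomial identity $\sqbinom{A-i}{B}_q\sqbinom{B}{i}_q=\sqbinom{A-i}{A-B}_q\sqbinom{A-B}{i}_q$. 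If $i_0>A$, the residue of the upper index is not $A-i_0$ because of a borrow, but both terms vanish anyway, since $\sqbinom{x}{i_0}_q=0$ for every $x\le A$.

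So each pair cancels modulo $\Phi_{p^j}$, with no twist $q^{e(r)}$ at all. Your fallback mentions the reflection, but the reflection has to be the involution that defines the pairing. It cannot serve as a correction applied after the single-digit pairing.
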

 \par

    We generalize the congruence in \Cref{ak} in two ways: firstly, 
    by proving the $q$-analog and secondly, by generalizing the definition of $k_r$. The main new results are \Cref{main} below (which gives a congruence for cyclotomic polynomials)
    and its corollaries (Cor. \ref{1coro} through \ref{when i=0}).
    
    \par
\subsection{Definition}
We will first generalize the definition of $k_r$. Consider a choice of integers $0\leq a < b\leq p-1$, define $k_r\in \mathbb{Z}_p$ by 
\begin{equation}\label{krdefined} k_r:= \sum_{j=0}^\infty ((b-a)r_j+a)p^j
\end{equation}
where $r_j$ is defined as before. Note that the coefficient $(b-a)r_j+a$ will either be $a$ or $b$ depending on whether $r_j=0$ or $r_j=1$. Observe that if $a\neq 0$ then $k_r\in \mathbb{Z}_p\backslash \mathbb{Z}$. \par

From \Cref{ak} while $-\frac{1}{2}$ may not be an integer, we can instead define it as a $p$-adic integer. Indeed we observe that $-\frac{1}{2}$ is the unique element with all $p$-adic digits $\frac{p-1}{2}$, so for a more general value that depends on the choice of $a$ and $b$, we define in $\mathbb{Z}_p$
\begin{equation}\alpha (a,b):= \sum_{j=0}^\infty \frac{a+b}{2}p^j=\frac{a+b}{2(1-p)}, \quad 2\vert a,\quad 2\vert b, \quad  0\leq a< b\leq p-1 .\nonumber \end{equation}
Indeed, if $a=0$ and $b=p-1$ then $\alpha (0,p-1)=-\frac{1}{2}$. The constraint $2\vert a$ and $2\vert b$ is to ensure that each digit of $k_r/2$ is an integer, each digit being defined as  $\frac{1}{2}((b-a)r_j+a)$ for $p^j$.  \par

We have now seen that we are working with $p$-adic inputs for binomial coefficients. It may be rather difficult to work with such binomial coefficients directly or even define them, so to get around this we will often work with finite truncations of these $p$-adic integers. So we introduce some notation related to this. \par 
\begin{definition}
For a $p$-adic integer $r$ defined by 
    $$r=r_0+r_1p+r_2p^2+\dots = \sum_{i=0}^\infty r_jp^j$$
    we can define the following two functions 
    $$T_m(r): = \sum_{j=m}^\infty r_jp^{j-m}$$
    and 
    $$T_m^\ell(r): = \sum_{j=m}^{m+\ell-1}r_jp^{j-m}.$$
\end{definition}
The first function $T_m(r)$ removes the first $m$ digits and shifts the remaining digits by $m$ spots. The second function $T_m^\ell (r)$ is the first $\ell$ digits of $T_m(r)$, and sets the remaining digits to zero. \par

Now we wish to consider a set of $k_r$ as defined in \Cref{krdefined} for fixed integers $a$ and $b$ so that $2\vert a$ and $2\vert b$ and $0\leq a< b\leq p-1$. Let $K_n=\{ k_r\vert 0\leq r\leq 2^n-1\}$, for our convenience it will be useful to use a second characterization $K_n = \{ \sum_{j=0}^{n-1} s_jp^j+\sum_{j=n}^\infty ap^j\vert s_j\in \{ a,b\} \}$. \par 
Now we wish to define a way of swapping the first $m$ digits of a given $k_r$ by swapping $a$ for $b$ and $b$ for $a$. 
\begin{definition} If $k_r\in K_n$ then $k_r=(b-a)\sum_{j=0}^{n-1} r_jp^j+\sum_{j=0}^\infty ap^j$ we define $r_j^c:=1-r_j$, then for $0\leq m\leq n$ we define 
$$D_m(k_r)=(b-a)\sum_{j=0}^{m-1}r_j^cp^j+(b-a)\sum_{j=m}^{n-1}r_jp^j+\sum_{j=0}^\infty ap^j.$$

Similarly, we define 
$$D_m(k_r/2)=\frac{1}{2}D_m(k_r).$$

\end{definition}
There are three parts here, the first $m$ digits that are swapped, the next $n-m$ digits that stay fixed, and the infinite tail that also stays fixed.\par

We now state some facts that we will make use of for our main proof.
\begin{lemma}\label[lemma]{Basics}
 The following are true for any $1\leq m\leq n.$
\begin{itemize}
\item[(a)] $k_r\in K_n$ iff $D_m(k_r)\in K_n$
\item[(b)] $D_m(D_m(k_r))=k_r$
\item[(c)] $T_0^m(D_m(k_r)/2)+T_0^m(k_r/2)=T_0^m(\alpha (a,b))$
\item[(d)] $T_\ell(k_r)=T_\ell (D_m(k_r))$ for $m\leq \ell\leq n.$ 
\item[(e)] For a given $k_r$ we have $T_0^1(k_r)=a$ iff $2\vert r$ (or equivalently $T_0^1(k_r)=b$ iff $2\not\vert r$). Further, there exists an integer $0\leq s\leq 2^n-1$ such that $k_s=D_m(k_r)$ and $s\not\equiv r \mod 2$. 
\end{itemize}
\end{lemma}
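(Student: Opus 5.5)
All five parts follow from the digit description of $K_n$. Every $k_r\in K_n$ is a $p$-adic integer with base-$p$ digits $s_j\in\{a,b\}$ for $j<n$ and $s_j=a$ for $j\ge n$. Since $0\le a<b\le p-1$, these digits lie in $\{0,\dots,p-1\}$, so they are the genuine $p$-adic digits of $k_r$. Since $a,b$ are even, $k_r/2$ has digits $s_j/2\in\{0,\dots,(p-1)/2\}$. Likewise $\alpha(a,b)$ has every digit equal to $(a+b)/2\le p-1$. In each case no carries occur, so $T_m$ and $T_m^\ell$ can be read off directly from these digit strings. The map $D_m$ acts digitwise: for $j<m$ it replaces $r_j$ by $1-r_j$, i.e. it swaps the digit $a\leftrightarrow b$, and it leaves all digits in positions $\ge m$ unchanged.

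For (a), $D_m(k_r)$ again has digits in $\{a,b\}$ below position $n$ and $a$ from position $n$ on, so it lies in $K_n$. The converse follows from (b). For (b), complementation is an involution, $(r_j^c)^c=r_j$. For (d), $T_\ell$ only reads digits in positions $\ge \ell\ge m$, and these agree for $k_r$ and $D_m(k_r)$. For (c), at each position $j<m$ the digits of $k_r/2$ and $D_m(k_r)/2$ are $\{a/2,b/2\}$ in some order, so they sum to $(a+b)/2$. This is at most $p-1$, so adding the truncations $T_0^m$ produces no carry. The sum is therefore $\sum_{j<m}\frac{a+b}{2}p^j=T_0^m(\alpha(a,b))$.

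For (e), $T_0^1(k_r)$ is the zeroth digit $(b-a)r_0+a$. It equals $a$ iff $r_0=0$ iff $2\mid r$, using $b\ne a$. For the second claim, set $s=\sum_{j<n} r'_j2^j$, where $r'_j=1-r_j$ for $j<m$ and $r'_j=r_j$ otherwise. Then $0\le s\le 2^n-1$ and $k_s=D_m(k_r)$. Since $m\ge1$, the bit $r_0$ is flipped, so $s\not\equiv r \pmod 2$.

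The only step needing care is (c). One must justify that $T_0^m$ of $k_r/2$ really is obtained by halving digits, which requires checking that halving creates no borrows. This holds because every digit of $k_r$ is even. Everything else is bookkeeping.
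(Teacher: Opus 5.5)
Your proof is correct. The paper gives no proof of this lemma and treats all five parts as immediate from the digit description of $K_n$ and the definition of $D_m$, so your digitwise verification is exactly the intended argument. Your remark that the displayed expansions of $k_r$, $k_r/2$ and $\alpha(a,b)$ are the canonical base-$p$ expansions (the digits lie in $\{0,\dots,p-1\}$ because $a,b$ are even and $(a+b)/2\le p-1$) is the one point the paper leaves implicit, and it is what lets $T_m$, $T_0^m$ and $D_m$ be read off these elements.
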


As stated before $k_r$ and $\alpha (a,b)$ are $p$-adic integers and may not be integers. In order to build up to these $p$-adic integers, we will consider finite truncations of them which live in $\mathbb{Z}$. 
\begin{definition} In general, we will choose integers $s$ and $t$ such that $n\leq s$ and $n\leq t$ so that we can define
    $$k_r^{(s)}:= T_0^s(k_r)$$
    and
    $$\alpha ^{(t)}(a,b):= T_0^t(\alpha (a,b))$$
\end{definition}
In a similar manner to how we defined $D_m$ for $k_r$ we define it for $k_r^{(s)}$ as well

\begin{definition} For $1\leq m\leq n\leq s$ 
    $$D_m(k_r^{(s)}):=T_0^s(D_m(k_r)),$$
    and similarly 
    $$D_m(k_r^{(s)}/2)=T_0^s(D_m(k_r/2)).$$
\end{definition}
\begin{lemma}\label[lemma]{BasicTruncate}
All of \Cref{Basics} apply to $k_r^{(s)}$ for any integers $1\leq m\leq n\leq s$.
\end{lemma}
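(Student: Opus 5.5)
The plan is to show that every statement in \Cref{Basics} depends only on the first $s$ $p$-adic digits of the quantities involved. Truncation $T_0^s$ preserves those digits, so each item should transfer directly. The key observation is that every $k_r\in K_n$ has digits $s_j\in\{a,b\}\subseteq\{0,\dots,p-1\}$. Likewise $k_r/2$ has digits $s_j/2\in\{a/2,b/2\}$, because $2\mid a$ and $2\mid b$. Hence $T_0^s(k_r/2)=\tfrac12 T_0^s(k_r)$, and $T_0^s$ acts on these numbers digit by digit with no carries. Concretely, $k_r^{(s)}=\sum_{j=0}^{s-1}s_jp^j$, where $s_j=(b-a)r_j+a$ for $j<n$ and $s_j=a$ for $n\le j<s$. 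By definition $D_m(k_r^{(s)})$ is the same finite sum with the digits $r_j$ replaced by $r_j^c$ for $j<m$. So I first set $K_n^{(s)}:=T_0^s(K_n)$ and note that $T_0^s$ is a bijection from $K_n$ onto $K_n^{(s)}$, since $s\ge n$ and the digits $s_0,\dots,s_{n-1}$ determine $r$.

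Next I go through the items in order. For (a) and (b), the identity $D_m(k_r^{(s)})=T_0^s(D_m(k_r))$ together with the bijection above transports the statements from \Cref{Basics}. In particular, $D_m(D_m(k_r^{(s)}))=T_0^s(D_m(D_m(k_r)))=k_r^{(s)}$. For (c), truncating further gives $T_0^m(T_0^s(x))=T_0^m(x)$ whenever $m\le s$. Therefore $T_0^m(D_m(k_r^{(s)})/2)+T_0^m(k_r^{(s)}/2)$ equals the corresponding expression for $k_r$, which is $T_0^m(\alpha(a,b))$. Since $m\le n\le t$, this also equals $T_0^m(\alpha^{(t)}(a,b))$. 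For (d), we have $T_\ell(T_0^s(x))=T_{\ell}^{s-\ell}(x)$ for $\ell\le s$. So applying $T_0^{s-\ell}$ to the identity $T_\ell(k_r)=T_\ell(D_m(k_r))$ from \Cref{Basics} gives the truncated version. For (e), the lowest digit is unchanged by truncation because $s\ge 1$. The integer $s'$ with $k_{s'}=D_m(k_r)$ supplied by \Cref{Basics}(e) then gives $k_{s'}^{(s)}=D_m(k_r^{(s)})$, with the same parity condition.

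The main obstacle is the digit-level care in (c). There one must check that adding the two truncated halves produces no carry, and that the result matches the digits $(a+b)/2$ of $\alpha$. For each $j<m$, the two $j$-th digits are $a/2$ and $b/2$ in some order, so they sum to $(a+b)/2\le p-1$ and no carry occurs. Besides this, I need to confirm that halving commutes with truncation, which holds because every digit of $k_r$ is even. Both checks are immediate once written digit-wise, so the whole lemma reduces to the remark that truncation commutes with all the digit-wise operations $D_m$, $T_\ell$, $T_0^m$ and halving.
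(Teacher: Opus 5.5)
Your proposal is correct and follows essentially the same route as the paper, whose one-line proof rests on the commutation $D_m(T_0^s(k_r))=T_0^s(D_m(k_r))$. You spell out the compatibilities that the paper leaves implicit: halving, $T_0^m$ and $T_\ell$ each commute suitably with $T_0^s$, and $D_m$ is well defined on truncations via the bijection $K_n\to T_0^s(K_n)$.
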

\begin{proof}
This is an immediate consequence of $D_m(T_0^s(k_r))=T_0^s(D_m(k_r))$. 
\end{proof}

\subsection{Binomial Coefficients}
Now the goal will be to prove a $q$-analogue of \Cref{ak}, so we briefly recall standard notation from $q$-series theory from \cite{Gasper_Rahman_2004}\cite{Stanley_1997}. For $k\geq 0$ and arbitrary $x$, the generalized binomial coefficient is defined by 
\begin{equation}
    {{x}\choose{k}}= \frac{x(x-1)\dots (x-k+1)}{k!}, \quad {{x}\choose{0}}=1.\nonumber 
\end{equation}
For $n\geq 0$, the $q$-integer and $q$-factorial are given by 
\begin{equation} [n]_q=  \frac{1-q^n}{1-q}, \quad [n]_q!=\prod_{i=1}^n[i]_q \nonumber 
\end{equation}
with $[0]_q!=1$. We also recall the $q$-Pochhammer symbol

\begin{equation}(x;q)_n:= \prod_{i=0}^{n-1}(1-xq^i), \quad (x;q)_0=1. \nonumber 
\end{equation}

The $q$-binomial (or Gaussian) coefficient is then defined by 
    \begin{equation}
 \sqbinom{n}{k}_q:=\frac{[n]_q!}{[k]_q![n-k]_q!}=\frac{(q;q)_n}{(q;q)_k(q;q)_{n-k}} \quad 0\leq k\leq n.\nonumber 
 \end{equation}
 As $q\rightarrow 1$, one recovers the ordinary binomial coefficient:
 \begin{equation}
 \lim_{q\rightarrow 1} \sqbinom{n}{k}_q = {{n}\choose{k}}.\nonumber 
 \end{equation}

\section{An application of the $q$-binomial analogue of Lucas's Theorem}

\subsection{$q$-Lucas Theorem}
The following result is a $q$-analogue of Lucas's theorem and will play a key role in the proof. Unlike the classical version modulo a prime, it holds for an arbitrary positive integer $k$. For this section and any subsequent section we will refer to the $k$-th cyclotomic polynomial as $\Phi _k(q)$ or $\Phi _k$, further $\zeta _k$ will refer to a primitive root $k$-th root of unity. 
\begin{theorem} \cite[Proposition 2.2]{DESARMENIEN198219} \label{Lucas qbinomial}
    If $a,b,r,s,k$ are integers, where $0\leq b, s< k$, then 

    $$ \sqbinom{ka+b}{kr+s}_q\equiv {{a}\choose{r}}\sqbinom{b}{s}_q \mod \Phi _k.$$
    More specifically $$\sqbinom{ka+b}{kr+s}_q-{{a}\choose{r}}\sqbinom{b}{s}_q\in \Phi _{k}(q)\mathbb{Z}[q].$$
\end{theorem}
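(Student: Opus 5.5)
The plan is to work with a primitive $k$-th root of unity. It suffices to show that $\sqbinom{ka+b}{kr+s}_q$ and $\binom{a}{r}\sqbinom{b}{s}_q$ agree when $q=\zeta_k$ for every primitive $k$-th root $\zeta_k$. Since $\Phi_k$ is monic and irreducible over $\mathbb{Q}$, vanishing of an integer polynomial at all primitive $k$-th roots gives divisibility by $\Phi_k$ in $\mathbb{Q}[q]$. Gauss's lemma (division by a monic polynomial) then upgrades this to divisibility in $\mathbb{Z}[q]$. We first dispose of degenerate cases: if $kr+s>ka+b$, both sides vanish, because $r>a$, or $r=a$ and $s>b$. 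The convention $\binom{a}{r}=0$ for $r>a$ and $\sqbinom{b}{s}_q=0$ for $s>b$ is used here.

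The main step is a generating-function argument via the $q$-binomial theorem
\[
\prod_{i=0}^{N-1}(1+xq^i)=\sum_{j=0}^{N} q^{\binom{j}{2}}\sqbinom{N}{j}_q x^j .
\]
Take $N=ka+b$ and specialize $q=\zeta=\zeta_k$. The product splits into $a$ full blocks of $k$ consecutive factors and one block of $b$ factors. Each full block equals $\prod_{i=0}^{k-1}(1+x\zeta^i)=1-(-x)^k$, since $\prod_i (y-\zeta^i)=y^k-1$ with $y=-x$. Hence the left side at $q=\zeta$ is
\[
\bigl(1-(-x)^k\bigr)^a\prod_{i=0}^{b-1}(1+x\zeta^{i})=\sum_{r}\binom{a}{r}(-1)^{r(k+1)}x^{kr}\cdot\sum_{s=0}^{b}\zeta^{\binom{s}{2}}\sqbinom{b}{s}_\zeta x^s ,
\]
where the second factor is again the $q$-binomial theorem with $N=b$. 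Since $0\le s\le b<k$, the exponent $kr+s$ determines $(r,s)$ uniquely, so comparing coefficients of $x^{kr+s}$ gives
\[
\zeta^{\binom{kr+s}{2}}\sqbinom{ka+b}{kr+s}_\zeta=(-1)^{r(k+1)}\zeta^{\binom{s}{2}}\binom{a}{r}\sqbinom{b}{s}_\zeta .
\]

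The remaining obstacle is the sign and root-of-unity bookkeeping. We compute $\binom{kr+s}{2}-\binom{s}{2}=k\binom{r}{2}k+krs+\tfrac{kr(k-1)}{2}\cdot$ (more cleanly, $\binom{kr+s}{2}=\binom{kr}{2}+krs+\binom{s}{2}$). Modulo $k$, $\zeta^{krs}=1$. Also $\zeta^{\binom{kr}{2}}=\zeta^{kr(kr-1)/2}$. For $k$ odd this equals $1$. For $k$ even, $\zeta^{k/2}=-1$, so it equals $(-1)^{r(kr-1)}=(-1)^r$. In both cases $\zeta^{\binom{kr}{2}}=(-1)^{r(k+1)}$, which cancels the sign exactly and yields the desired identity at $q=\zeta$. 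One should double-check the parity computation for $k$ even, since that is where errors are likely. Integrality of both sides as polynomials in $\mathbb{Z}[q]$ is standard, so the divisibility statement in $\mathbb{Z}[q]$ follows.
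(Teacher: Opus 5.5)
Your argument is correct, and it takes a genuinely different route from the paper. The paper works multiplicatively. It splits $\sqbinom{ka+b}{kr+s}_q$ into the three factors $\frac{(q;q)_{ka+b}}{(1-q^k)^a}$, $\frac{(1-q^k)^r}{(q;q)_{kr+s}}$ and $\frac{(1-q^k)^{a-r}}{(q;q)_{k(a-r)+(b-s)}}$. It evaluates each at $\zeta_k$ using D\'esarm\'enien's regularized value $k^A A!\,(\zeta_k;\zeta_k)_B$ for $(q;q)_{kA+B}/(1-q^k)^A$, and then lets the powers of $k$ cancel.

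You instead specialize Cauchy's $q$-binomial theorem at $q=\zeta_k$. There the product collapses to $(1-(-x)^k)^a\prod_{i<b}(1+x\zeta_k^i)$, so everything comes from a single coefficient comparison. The cost is the twist $q^{\binom{j}{2}}$, which forces your check $\zeta_k^{\binom{kr}{2}}=(-1)^{r(k+1)}$; your parity computation is right for both odd and even $k$.

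In exchange, your route is more uniform. The paper's reading of $k(a-r)+(b-s)$ as ``quotient $a-r$, remainder $b-s$'' silently needs $s\le b$. The case $b<s<k$ would then require a separate count of the factors $1-q^{kj}$. For you that case is automatic: the coefficient of $x^{kr+s}$ on the right is zero, because the second factor has degree $b$ and $kr'+s'=kr+s$ with $0\le s'\le b$ is impossible. This gives $\sqbinom{ka+b}{kr+s}_{\zeta_k}=0=\binom{a}{r}\sqbinom{b}{s}_{\zeta_k}$.

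Three small fixes to your write-up:

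\begin{enumerate}
\item State the case $b<s<k$ explicitly. As written, you treat only $kr+s>ka+b$ as degenerate and then assume $s\le b$.
\item The block identity does not follow from $\prod_i(y-\zeta^i)=y^k-1$ by literally setting $y=-x$. Write instead $\prod_{i=0}^{k-1}(1-z\zeta^i)=z^k\prod_{i=0}^{k-1}(z^{-1}-\zeta^i)=1-z^k$ with $z=-x$.
\item Drop the garbled first formula for $\binom{kr+s}{2}-\binom{s}{2}$. It does equal $\binom{kr}{2}+krs$, but the clean identity $\binom{kr+s}{2}=\binom{kr}{2}+krs+\binom{s}{2}$ is all you use.
\end{enumerate}
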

We emphasis that both sides of the congruence belong to $\mathbb{Z}[q]$. We briefly recall the argument 
Since $ka+b$ and $kr+s$ are integers, we may conclude that $\sqbinom{ka+b}{kr+s}_q\in \mathbb{Z}[q]$, it follows that we may rewrite this as

\begin{eqnarray}
    \sqbinom{ka+b}{kr+s}_q &=& \frac{(q;q)_{ka+b}}{(q;q)_{kr+s}(q;q)_{k(a-r)+(b-s)}} \nonumber \\
    &=& \frac{(q;q)_{ka+b}}{(1-q^k)^a}\frac{(1-q^k)^r}{(q;q)_{kr+s}}\frac{(1-q^k)^{a-r}}{(q;q)_{k(a-r)+(b-s)}} \nonumber 
\end{eqnarray}
It is enough to take a look at 

$$\frac{(q;q)_{ka+b}}{(1-q^k)^a}=\frac{(\prod_{s=0}^{a-1}(q^{ks+1};q)_{k-1})(q^k;q^k)_a(q^{ka+1};q)_b }{(1-q^k)^a}.$$
From here if we apply $q\rightarrow \zeta _k$, it follows by \cite[Proposition 2.1]{DESARMENIEN198219} that 
$$\lim_{q\rightarrow \zeta _k}\frac{(q;q)_{ka+b}}{(1-q^k)^a}=k^aa!(\zeta _k;\zeta _k)_{b}$$
which implies that
$$\frac{(q;q)_{ka+b}}{(1-q^k)^a}\equiv k^aa!(q;q)_{b}\mod \Phi _k(q)$$
over $\mathbb{Z}[q]$.
Now using this identity we can now assert the following:
\begin{eqnarray}
    \sqbinom{ka+b}{kr+s}_q
    &=& \frac{(q;q)_{ka+b}}{(1-q^k)^a}\frac{(1-q^k)^r}{(q;q)_{kr+s}}\frac{(1-q^k)^{a-r}}{(q;q)_{k(a-r)+(b-s)}} \nonumber \\
(q\rightarrow \zeta _k)    &\rightarrow & \frac{k^aa!(\zeta ; \zeta )_{b}}{k^rr!(\zeta;\zeta )_sk^{a-r}(a-r)!(\zeta;\zeta)_{b-s}} \nonumber \\
&=&\frac{a!}{r!(a-r)!}\frac{(\zeta;\zeta)_{b}}{(\zeta;\zeta)_{s}(\zeta;\zeta)_{b-s}} \nonumber \\
&=& {{a}\choose{r}} \sqbinom{b}{s}_{\zeta_k} \nonumber 
\end{eqnarray}
Note that this is the image of another element ${{a}\choose{r}} \sqbinom{b}{s}_{q}$ which also lives in $\mathbb{Z}[q]$ thus $\sqbinom{ka+b}{kr+s}_{\zeta _k}- {{a}\choose{r}}\sqbinom{b}{s}_{\zeta _k} =0$
which is equivalent to 

$$\sqbinom{ka+b}{kr+s}_q- {{a}\choose{r}}\sqbinom{b}{s}_q \in \Phi _k(q)\mathbb{Z}[q].$$
\par

\begin{remark} While this result is quite useful it's worth mentioning some of the limitations. We note that $a,b,r,s,k$ are integers which imply that the elements $ka+b$ and $kr+s$ must also be integers, that is to say we can't apply this theorem directly to $p$-adic inputs. This is the primary reason why we must work with truncated approximation of $p$-adic numbers.
\par

\end{remark}
Finally, the last piece to finalize the proof is the following identity.
\begin{lemma}\label[lemma]{binomial fact1}
    $$\sqbinom{A-i}{B}_q\sqbinom{B}{i}_q=\sqbinom{A-i}{A-B}_q\sqbinom{A-B}{i}_q$$
\end{lemma}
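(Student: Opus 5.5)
Both sides reduce to the same ratio of $q$-Pochhammer symbols, so I will expand each product with the definition $\sqbinom{n}{k}_q=\frac{(q;q)_n}{(q;q)_k(q;q)_{n-k}}$ and compare. I assume the natural range where all four coefficients are genuine Gaussian binomials, namely $0\le i\le B\le A-i$. Equivalently $0\le i\le A-B$ and $A-B\le A-i$, so every factorial below has a non-negative argument. Outside this range, both sides vanish under the usual convention that $\sqbinom{n}{k}_q=0$ when $k<0$ or $k>n$ with $n\ge 0$. Indeed, $B<i$ kills the factor $\sqbinom{B}{i}_q$ on the left and forces $A-B>A-i$, which kills $\sqbinom{A-i}{A-B}_q$ on the right. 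Symmetrically, $B>A-i$ kills the left side and forces $A-B<i$, which kills the right side. I will dispose of these boundary cases first in one sentence.

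For the main case, the left-hand side is
$$\frac{(q;q)_{A-i}}{(q;q)_B\,(q;q)_{A-i-B}}\cdot\frac{(q;q)_B}{(q;q)_i\,(q;q)_{B-i}}=\frac{(q;q)_{A-i}}{(q;q)_i\,(q;q)_{B-i}\,(q;q)_{A-B-i}}.$$
The right-hand side is
$$\frac{(q;q)_{A-i}}{(q;q)_{A-B}\,(q;q)_{B-i}}\cdot\frac{(q;q)_{A-B}}{(q;q)_i\,(q;q)_{A-B-i}},$$
where I use $(A-i)-(A-B)=B-i$. After cancelling $(q;q)_{A-B}$ this is the same expression. Hence both sides equal the $q$-multinomial coefficient $\frac{[A-i]_q!}{[i]_q!\,[B-i]_q!\,[A-B-i]_q!}$, and the identity holds as rational functions, hence in $\mathbb{Z}[q]$.

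The only step needing care is the bookkeeping of the index ranges, so that each $(q;q)_m$ has $m\ge 0$ and the cancellations are legitimate. The symmetry of the multinomial expression in $B-i$ and $A-B-i$ makes the identity transparent. If the paper intends $A$ to be a truncated quantity such as $\alpha^{(t)}$ or $k_r^{(s)}$, nothing changes, since these are ordinary non-negative integers.
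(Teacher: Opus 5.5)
Your proof is correct and is essentially the paper's argument. The paper likewise expands the left side into $q$-factorials, cancels $[B]_q!$ to reach $\frac{[A-i]_q!}{[i]_q!\,[B-i]_q!\,[A-B-i]_q!}$, and regroups this as the right side; your explicit range $0\le i\le B\le A-i$ and boundary-case remark are a minor addition the paper leaves implicit.
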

\begin{proof}
\begin{equation} \frac{[A-i]_q!}{[B]_q![A-B-i]_q!}\cdot \frac{[B]_q!}{[i]_q![B-i]_q!}=\frac{[A-i]_q!}{[A-B-i]_q![i]_q![B-i]_q!}=\frac{[A-i]_q!}{[A-B]_q![B-i]_q!}\cdot \frac{[A-B]_q!}{[i]_q![A-B-i]_q!} \nonumber 
\end{equation}
\end{proof}

\subsection{Main Result}
We are now ready to prove our main result. We want control over the individual truncation of $k_r/2$ and $\alpha (a,b)$ which we will denote with $k_r^{(s)}/2$ and $\alpha ^{(t)}(a,b)$. It turns out that as long as $n\leq s\leq t$ our identity will hold for any $\Phi _{p^m}$ with $1\leq m\leq n$. 
\begin{theorem}\label{main} Fix a positive integer $n$. Let $a$ and $b$ be positive even integers such that $0\leq a<b\leq p-1$. Let $m$, $s$, and $t$ be positive integers such that $1\leq m\leq n\leq s\leq t$. Finally let $i$ be an arbitrary non-negative integer, then

$$\sum_{r=0}^{2^n-1}(-1)^r\sqbinom{\frac{k^{(s)}_r}{2}}{i}_q\sqbinom{\alpha^{(t)}(a,b)-i}{\frac{k^{(s)}_r}{2}}_q\equiv 0\mod \Phi _{p^m}$$
\end{theorem}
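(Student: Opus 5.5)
The plan is to pair the terms of the sum so that paired terms cancel modulo $\Phi_{p^m}$. For $0\le r\le 2^n-1$, \Cref{Basics}(a),(e) (valid for truncations by \Cref{BasicTruncate}) give an index $r'$ with $k_{r'}^{(s)}=D_m(k_r^{(s)})$ and $r'\not\equiv r \pmod 2$. By \Cref{Basics}(b), $r\mapsto r'$ is an involution, and it has no fixed points since $m\ge 1$ changes the digit $p^0$. So it suffices to show that the $r$-term and the $r'$-term are congruent modulo $\Phi_{p^m}$; the signs $(-1)^r$ and $(-1)^{r'}$ are opposite, so the pairs cancel.

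Set $A=\alpha^{(t)}(a,b)$, $B=k_r^{(s)}/2$ and $B'=D_m(k_r^{(s)})/2$, and write each in base $p^m$:
\[
A=p^mA_1+A_0,\qquad B=p^mB_1+B_0,\qquad B'=p^mB_1+B_0',
\]
with $0\le A_0,B_0,B_0'<p^m$. The high parts of $B$ and $B'$ agree by \Cref{Basics}(d), and $B_0+B_0'=A_0$ by \Cref{Basics}(c). In particular $B_0,B_0'\le A_0$, and $A-B'=p^m(A_1-B_1)+B_0$. By \Cref{binomial fact1}, the $r'$-term equals
\[
\sqbinom{A-i}{B'}_q\sqbinom{B'}{i}_q=\sqbinom{A-i}{A-B'}_q\sqbinom{A-B'}{i}_q ,
\]
and the $r$-term is $\sqbinom{A-i}{B}_q\sqbinom{B}{i}_q$. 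Note that $B$ and $A-B'$ have the same low part $B_0$.

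Write $i=p^mi_1+i_0$ and $A-i=p^mc_1+c_0$, with $0\le i_0,c_0<p^m$, and apply \Cref{Lucas qbinomial} with $k=p^m$ to all four factors. Both terms then become a common factor
\[
X=\sqbinom{c_0}{B_0}_q\sqbinom{B_0}{i_0}_q
\]
multiplied by ordinary binomials. For the $r$-term the multiplier is ${c_1\choose B_1}{B_1\choose i_1}$, and for the $r'$-term it is ${c_1\choose A_1-B_1}{A_1-B_1\choose i_1}$.

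There are two cases, according to whether the subtraction $A-i$ borrows at the $p^m$ level; this case split is the main point to get right. If $i_0\le A_0$, then $c_1=A_1-i_1$, and the classical ($q=1$) case of \Cref{binomial fact1} shows the two multipliers are equal. If $i_0>A_0$, then $B_0\le A_0<i_0$, so $\sqbinom{B_0}{i_0}_q=0$ and both terms are $\equiv 0$.

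Some care is needed with the edge conventions. Terms with $i>B$, $B>A-i$ or $A-i<0$ should be read as $0$, and this must be checked to be consistent with the Lucas reductions. For example, if $A-i<0$, then the $r$-term and the $r'$-term both vanish identically. I would also verify that \Cref{Lucas qbinomial} correctly returns $0$ when the lower index exceeds the upper one in either the high or the low part. Finally, summing the cancelling pairs gives the congruence in \Cref{main}.
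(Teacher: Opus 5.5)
Your proposal is correct and is essentially the paper's proof: the same $D_m$-pairing of indices of opposite parity, the same reduction by \Cref{Lucas qbinomial} at modulus $p^m$, and the same case split on whether $A-i$ borrows at level $p^m$, with \Cref{binomial fact1} and \Cref{Basics}(c),(d) doing the work. The only difference is the order of steps. You apply \Cref{binomial fact1} to the whole $r'$-term before reducing, so the $q$-parts coincide and you need the classical identity on the high digits. The paper reduces first, so the high parts coincide by \Cref{Basics}(d) and the $q$-identity is used only on the low digits.
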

\begin{proof}
First, consider arbitrary integers $m,s,t$ so that $1\leq m\leq n\leq s\leq t$. Consider the set $K_n$ as defined before, our goal is to show that for any $k_r\in K_n$ for a given $m$ we can find a distinct element $k_{r'}\in K_n$ so that 
$$\sqbinom{\frac{k^{(s)}_r}{2}}{i}_q\sqbinom{\alpha^{(t)}(a,b)-i}{\frac{k^{(s)}_r}{2}}_q\equiv \sqbinom{\frac{k^{(s)}_{r'}}{2}}{i}_q\sqbinom{\alpha^{(t)}(a,b)-i}{\frac{k^{(s)}_{r'}}{2}}_q\mod \Phi _{p^m}$$

We can rewrite any non-negative integer $N$ in the form
    $$N = T_0^m(N)+p^mT_m(N)$$
hence by an application of \Cref{Lucas qbinomial} we get the following identities

\begin{eqnarray}\sqbinom{\frac{k^{(s)}_r}{2}}{i}_q&=& \sqbinom{T_0^{m}(\frac{k^{(s)}_r}{2})+p^mT_m(\frac{k^{(s)}_r}{2})}{T_0^{m}(i)+p^mT_m(i)}_q \nonumber \\
&\equiv& 
{{T_m(\frac{k_r^{(s)}}{2}}\choose{T_m(i)}}\sqbinom{T_0^m(\frac{k_r^{(s)}}{2})}{T_0^m(i)}_q \mod \Phi_{p^m} \label{eq: breakuppt1} 
\end{eqnarray}

\begin{eqnarray}\sqbinom{\alpha^{(t)}(a,b)-i}{\frac{k^{(s)}_r}{2}}_q &=& \sqbinom{T_0^{m}(\alpha^{(t)}(a,b)-i)+p^mT_m(\alpha^{(t)}(a,b)-i)}{T_0^{m}(\frac{k^{(s)}_r}{2})+p^mT_m(\frac{k^{(s)}_r}{2})}_q \nonumber \\ 
&\equiv  &
{{T_m(\alpha^{(t)}(a,b)-i)}\choose{T_m(\frac{k^{(s)}_r}{2})}}\sqbinom{T_0^m(\alpha^{(t)}(a,b)-i)}{T_0^m(\frac{k^{(s)}_r}{2})}_q \mod \Phi_{p^m} \label{eq: breakuppt2}
\end{eqnarray}

we will focus our attention on the $q$-binomial portion first.
\begin{claim}
\begin{equation}\label{eq:qbinomial equality} \sqbinom{T_0^m(\frac{k^{(s)}_r}{2})}{T_0^m(i)}_q   \sqbinom{T_0^m(\alpha^{(t)}(a,b)-i)}{T_0^m(\frac{k^{(s)}_r}{2})}_q= \sqbinom{T_0^m(D_m(\frac{k^{(s)}_r}{2}))}{T_0^m(i)}_q \sqbinom{T_0^m(\alpha^{(t)}(a,b)-i)}{T_0^m(D_m(\frac{k^{(s)}_r}{2})}_q
\end{equation}
\end{claim}
\begin{proof}
In order to prove this we will separate this into two cases. We see that in general 
$$T_0^m(\alpha (a,b)-i)+T_0^m(i)=T_0^n(\alpha (a,b))+\epsilon_m$$
where $\epsilon_m$ is the carry term. The two cases are whether or not $\epsilon _m=0$. For the first case $\epsilon _m\neq 0$ this term is non-zero when $T_0^m(\alpha ^{(t)}(a,b))<T_0^m(i)$, but this would also imply that $T_0^m(k_r^{(s)}/2)<T_0^m(i)$ and $T_0^m(D_m(k_r^{(s)}/2))<T_0^m(i)$ and hence both sides are $0$ and thus we have equality. \\
For the second case $\epsilon_m =0$ so we can assume 
$$T_0^m(\alpha (a,b)-i)+T_0^m(i)=T_0^n(\alpha (a,b))$$
so we can apply \Cref{binomial fact1} and \Cref{Basics}(c) plus \Cref{BasicTruncate} to result in the following equalities

\begin{eqnarray}\sqbinom{T_0^m(\frac{k^{(s)}_r}{2})}{T_0^m(i)}_q   \sqbinom{T_0^m(\alpha^{(t)}(a,b)-i)}{T_0^m(\frac{k^{(s)}_r}{2})}_q&=&\sqbinom{T_0^m(\alpha^{(t)}(a,b))-T_0^m(\frac{k^{(s)}_r}{2})}{T_0^m(i)}_q   \sqbinom{T_0^m(\alpha^{(t)}(a,b)-i)}{T_0^m(\alpha^{(t)}(a,b))-T_0^m(\frac{k^{(s)}_r}{2})}_q \nonumber \\
&=&  \sqbinom{T_0^m(D_m(\frac{k^{(s)}_r}{2}))}{T_0^m(i)}_q \sqbinom{T_0^m(\alpha^{(t)}(a,b)-i)}{T_0^m(D_m(\frac{k^{(s)}_r}{2})}_q  . \nonumber 
\end{eqnarray}
\end{proof}

Next we consider the regular binomial coefficients and apply \Cref{Basics}(d) and \Cref{BasicTruncate} to result in 
\begin{equation}\label{eq:Binomialequal}
    {{T_m(\frac{k^{(s)}_r}{2})}\choose{T_m(i)}}{{T_m(\alpha^{(t)}(a,b)-i)}\choose{T_m(\frac{k^{(s)}_r}{2})}}={{T_m(D_m(\frac{k^{(s)}_r}{2}))}\choose{T_m(i)}}{{T_m(\alpha^{(t)}(a,b)-i)}\choose{T_m(D_m(\frac{k^{(s)}_r}{2}))}}.
\end{equation}

Applying \Cref{eq:qbinomial equality} and \Cref{eq:Binomialequal} to \Cref{eq: breakuppt1} and \Cref{eq: breakuppt2} we have established the following equality

\begin{equation}\label{result 1}\sqbinom{\frac{k^{(s)}_r}{2}}{i}_q\sqbinom{\alpha^{(t)}(a,b)-i}{\frac{k^{(s)}_r}{2}}_q\equiv \sqbinom{D_m(\frac{k^{(s)}_r}{2})}{i}_q\sqbinom{\alpha^{(t)}(a,b)-i}{D_m(\frac{k^{(s)}_r}{2})}_q\mod \Phi _{p^m}.
\end{equation}
We have $\{ k_i \vert 0\leq i \leq 2^n-1 \} = \{ k_r\in K_n \}$, since we also define $K_n$ starting from every possible combination of $a$ and $b$ in the first $n$ digits, we need a way to recover $r$, or more specifically just the first digit as that is what determines it's parity to preserve $(-1)^r$. By \Cref{Basics}(e) for any $k_r\in K_n$ we define $\gamma (k_r):=\frac{1}{b-a}(T_0^1(k_r)-a)$ and similarly $\gamma (k_r^{(s)}):=\frac{1}{b-a}(T_0^1(k_r^{(s)})-a)$, this function extracts the parity of $r$ associated to $k_r$, thus we can now state 
\begin{equation}\sum_{r=0}^{2^n-1}(-1)^r\sqbinom{\frac{k^{(s)}_r}{2}}{i}_q\sqbinom{\alpha^{(t)}(a,b)-i}{\frac{k^{(s)}_r}{2}}_q= \sum_{k_r\in K_n}(-1)^{\gamma(k^{(s)}_r)}\sqbinom{\frac{k^{(s)}_r}{2}}{i}_q\sqbinom{\alpha^{(t)}(a,b)-i}{\frac{k^{(s)}_r}{2}}_q.\nonumber \end{equation}
\\
Recall that $k^{(s)}_r$ was arbitrarily chosen from $0\leq r\leq 2^n-1$ and note that each $D_m$ is an involution. For each $k_r\in K_n$, by \Cref{Basics}(a)(e) and \Cref{BasicTruncate} it follows that $D_m(k^{(s)}_r)\in K_n$ and $k^{(s)}_r\neq D_m(k^{(s)}_r)$ (a consequence of having different parity). It follows that there exists an element $0\leq r'\leq 2^n-1$ so that $D_m(k_r^{(s)}/2)=k_{r'}^{(s)}/2$ holds. By \Cref{result 1} and \Cref{Basics}(e) along with \Cref{BasicTruncate} it follows that 

$$(-1)^{\gamma (k_r)} \sqbinom{\frac{k^{(s)}_r}{2}}{i}_q\sqbinom{\alpha^{(t)}(a,b)-i}{\frac{k^{(s)}_r}{2}}_q+(-1)^{\gamma(k_{r'})}\sqbinom{\frac{k^{(s)}_{r'}}{2}}{i}_q\sqbinom{\alpha^{(t)}(a,b)-i}{ \frac{k^{(s)}_{r'}}{2}}_q \equiv 0 \mod \Phi _{p^m}$$ 
this shows that for every element $k_r\in K_n$ there exists a distinct element $k_{r'}\in K_n$ such that their corresponding terms will sum to $0$ modulus $\Phi _{p^m}(q)$. Finally by using \Cref{Basics}(a)(b) and \Cref{BasicTruncate} we can conclude that
$$\sum_{k_r\in K_n}(-1)^{\gamma(k_r^{(s)})}\sqbinom{\frac{k^{(s)}_r}{2}}{i}_q\sqbinom{\alpha^{(t)}(a,b)-i}{\frac{k^{(s)}_r}{2}}_q \equiv 0\mod \Phi _{p^m}.$$

\end{proof}

\subsection{Some Corollaries}

Now that we have established our main result, we can see some of the immediate consequences. The first two results essentially build a link from working with $q$-binomials to working with regular binomial coefficients. 
\begin{corollary}\label[corollary]{1coro} Following the same assumptions in \Cref{main}, we have the following congruence in $\mathbb{Z}[q]$
    $$\sum_{r=0}^{2^n-1}(-1)^r \sqbinom{k_r^{(s)}/2}{i}_q\sqbinom{\alpha^{(t)}(a,b)-i}{k_r^{(s)}/2}_q\equiv 0 \mod [p^n]_q$$
\end{corollary}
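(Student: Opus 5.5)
The plan is to upgrade the congruences of \Cref{main} modulo the individual cyclotomic polynomials $\Phi_{p^m}$ to a single congruence modulo $[p^n]_q$. We use the factorization
$$[p^n]_q=\frac{1-q^{p^n}}{1-q}=\prod_{m=1}^{n}\Phi_{p^m}(q),$$
which holds because the divisors of $p^n$ other than $1$ are exactly $p,p^2,\dots,p^n$.

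First, denote by $S(q)\in\mathbb{Z}[q]$ the alternating sum on the left-hand side. It is a polynomial with integer coefficients, since every $q$-binomial coefficient with integer entries lies in $\mathbb{Z}[q]$; the truncations $k_r^{(s)}/2$ and $\alpha^{(t)}(a,b)-i$ are integers. If $\alpha^{(t)}(a,b)-i$ is negative, the coefficient is zero under the convention used in the proof of \Cref{main}. Next, fix $n\le s\le t$ as in the hypotheses. For each $m$ with $1\le m\le n$, \Cref{main} applies with these same $s,t$ and gives $\Phi_{p^m}(q)\mid S(q)$ in $\mathbb{Z}[q]$. The important point is that $S(q)$ does not depend on $m$, so one fixed polynomial is divisible by every $\Phi_{p^m}$ simultaneously.

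Second, we combine these divisibilities. The polynomials $\Phi_{p^1},\dots,\Phi_{p^n}$ are monic and irreducible over $\mathbb{Q}$, and they are pairwise distinct, so they are pairwise coprime in $\mathbb{Q}[q]$. Hence their product $[p^n]_q$ divides $S(q)$ in $\mathbb{Q}[q]$; write $S=[p^n]_q\cdot Q$ with $Q\in\mathbb{Q}[q]$. To conclude that $Q\in\mathbb{Z}[q]$, we use that $[p^n]_q$ is monic: polynomial long division of $S\in\mathbb{Z}[q]$ by a monic integer polynomial produces an integer quotient and an integer remainder, and uniqueness of division in $\mathbb{Q}[q]$ forces that remainder to be $0$. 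This gives $S(q)\equiv 0 \bmod [p^n]_q$ in $\mathbb{Z}[q]$.

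The only step that needs care is the passage from divisibility by each factor separately to divisibility by their product over $\mathbb{Z}[q]$, which is not a principal ideal domain. The argument above handles it by working in $\mathbb{Q}[q]$ and then using monicity of $[p^n]_q$ (or, equivalently, Gauss's lemma). Everything else is a direct application of \Cref{main} for each $m$.
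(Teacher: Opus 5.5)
Your proposal is correct and follows the same route as the paper. You apply \Cref{main} for each $1\le m\le n$ with the same fixed $s,t$, then use $\prod_{m=1}^n\Phi_{p^m}(q)=[p^n]_q$. The paper calls the passage from divisibility by each $\Phi_{p^m}$ to divisibility by their product in $\mathbb{Z}[q]$ immediate, and you justify it via pairwise coprimality in $\mathbb{Q}[q]$ and division by the monic polynomial $[p^n]_q$.
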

\begin{proof}
From \Cref{main}, we have $$\sum_{r=0}^{2^n-1}(-1)^r \sqbinom{k_r^{(s)}/2}{i}_q\sqbinom{\alpha^{(t)}(a,b)-i}{k_r^{(s)}/2}_q \in \Phi _{p^m}(q)\mathbb{Z}[q]$$ for $1\leq m\leq n$, which leads immediately into
$$\sum_{r=0}^{2^n-1}(-1)^r \sqbinom{k_r^{(s)}/2}{i}_q\sqbinom{\alpha^{(t)}(a,b)-i}{k_r^{(s)}/2}_q\in (\prod_{m=1}^n\Phi_{p^m}(q))\mathbb{Z}[q].$$ As $\prod_{m=1}^n\Phi _{p^m}(q)=[p^n]_q$, we have yielded the desired result.
\end{proof}

\begin{corollary}\label[corollary]{2coro} Again following the same assumptions in \Cref{main} we have the following congruence over $\mathbb{Z}[q]$
    $$\sum_{r=0}^{2^n-1}(-1)^r {{k_r^{(s)}/2}\choose{i}}{{\alpha^{(t)}(a,b)-i}\choose{k_r^{(s)}/2}}\equiv 0 \mod p^n$$
\end{corollary}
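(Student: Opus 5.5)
The plan is to specialize \Cref{1coro} at $q=1$. All arguments of the $q$-binomials are non-negative integers: $k_r^{(s)}/2$, $i$, and $\alpha^{(t)}(a,b)-i$. We use the convention that $\sqbinom{N}{k}_q=0$ when $k>N$ or $N<0$. This convention is implicit in the proof of \Cref{main}: when $i>\alpha^{(t)}(a,b)$, every term vanishes, and the claim is trivial.

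Set
$$F(q):=\sum_{r=0}^{2^n-1}(-1)^r \sqbinom{k_r^{(s)}/2}{i}_q\sqbinom{\alpha^{(t)}(a,b)-i}{k_r^{(s)}/2}_q\in\mathbb{Z}[q].$$
By \Cref{1coro}, we can write $F(q)=[p^n]_q\,G(q)$ with $G(q)\in\mathbb{Z}[q]$. The key point is that the quotient $G$ has integer coefficients, and not merely rational ones. This holds because $[p^n]_q=\prod_{m=1}^n\Phi_{p^m}(q)$ is monic, so divisibility in $\mathbb{Z}[q]$ is meaningful. The proof of \Cref{1coro} supplies this, since each $\Phi_{p^m}$ is monic and the factors are pairwise coprime.

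Next, evaluate at $q=1$. Since $\sqbinom{N}{k}_q$ is a polynomial in $q$ whose value at $q=1$ is $\binom{N}{k}$ (the limit recalled earlier), we get $F(1)=\sum_r(-1)^r\binom{k_r^{(s)}/2}{i}\binom{\alpha^{(t)}(a,b)-i}{k_r^{(s)}/2}$. For the vanishing cases, note that the ordinary binomial $\binom{N}{k}$ with $0\le N<k$ is also $0$, so the conventions match. On the other side, $[p^n]_1=p^n$ and $G(1)\in\mathbb{Z}$. Hence $F(1)=p^nG(1)\equiv 0\bmod p^n$, which is the claim. The phrase ``over $\mathbb{Z}[q]$'' in the statement just means that the congruence is obtained by this specialization.

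The only real obstacle is ensuring that the quotient by $[p^n]_q$ is integral. This reduces to the monicity of cyclotomic polynomials: by Gauss's lemma, the division algorithm by a monic polynomial stays in $\mathbb{Z}[q]$. Since the $\Phi_{p^m}$ are distinct irreducible factors, divisibility of $F$ by each of them gives divisibility by their product in $\mathbb{Z}[q]$.
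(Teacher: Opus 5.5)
Your proposal is correct and is essentially the paper's own proof: write the sum from \Cref{1coro} as $[p^n]_q\,G(q)$ with $G\in\mathbb{Z}[q]$, specialize at $q=1$, and use $[p^n]_1=p^n$ together with $G(1)\in\mathbb{Z}$. Your extra remarks make explicit two points the paper leaves implicit. The first is that the quotient $G$ is integral, since the $\Phi_{p^m}$ are distinct irreducibles in the UFD $\mathbb{Z}[q]$. The second is the case $i>\alpha^{(t)}(a,b)$, where the terms vanish because $k_r^{(s)}/2\le\alpha^{(t)}(a,b)$ forces the first factor $\binom{k_r^{(s)}/2}{i}$ to be zero, not because of any convention for a negative upper index.
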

\begin{proof}
We know \Cref{1coro} holds, thus 
$$\sum_{r=0}^{2^n-1}(-1)^r \sqbinom{k_r^{(s)}/2}{i}_q\sqbinom{\alpha^{(t)}(a,b)-i}{k_r^{(s)}/2}_q\in [p^n]_q\mathbb{Z}[q],$$ this implies there exists some $f(q)\in \mathbb{Z}[q]$ such that $$f(q)[p^n]_q=\sum_{r=0}^{2^n-1}(-1)^r \sqbinom{k_r^{(s)}/2}{i}_q\sqbinom{\alpha^{(t)}(a,b)-i}{k_r^{(s)}/2}_q .$$ Now $f(1)\in \mathbb{Z}$, and $[p^n]_1=p^n\in \mathbb{Z}$, thus $$\lim_{q\rightarrow 1}f(q)[p^n]_q=\lim_{q\rightarrow 1}\sum_{r=0}^{2^n-1}(-1)^r \sqbinom{k_r^{(s)}/2}{i}_q\sqbinom{\alpha^{(t)}(a,b)-i}{k_r^{(s)}/2}_q\in \in p^n\mathbb{Z}$$
and so it follows that

$$\sum_{r=0}^{2^n-1}(-1)^r {{k_r^{(s)}/2}\choose{i}}{{\alpha^{(t)}(a,b)-i}\choose{k_r^{(s)}/2}}\in p^n\mathbb{Z}.$$

\end{proof}
At this point everything we have established was for integer inputs to the binomial coefficients, our goal now is to remove the truncations and work with $p$-adic inputs.

\begin{corollary}\label[corollary]{3coro} Now we consider the results of \Cref{main}, \Cref{1coro}, and \Cref{2coro}. The following hold for an arbitrary integer $1\leq n\leq s$
    \begin{itemize}
        \item[(a)] $\sum_{r=0}^{2^n-1}(-1)^r \sqbinom{k_r^{(s)}/2}{i}_q\sqbinom{\alpha(a,b)-i}{k_r^{(s)}/2}_q\equiv 0 \mod \Phi _{p^m}(q)$ for $1\leq m\leq n$ in $\mathbb{Z}_p[q]$.
        \item[(b)] $\sum_{r=0}^{2^n-1}(-1)^r \sqbinom{k_r^{(s)}/2}{i}_q\sqbinom{\alpha(a,b)-i}{k_r^{(s)}/2}_q\equiv 0 \mod [p^n]_q$ in $\mathbb{Z}_p[q]$.
        \item[(c)] $\sum_{r=0}^{2^n-1}(-1)^r {{k_r^{(s)}/2}\choose{i}}{{\alpha(a,b)-i}\choose{k_r^{(s)}/2}}\equiv 0\mod p^n$ in $\mathbb{Z}_p$.
    \end{itemize}
\end{corollary}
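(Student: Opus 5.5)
The plan is to obtain all three parts from \Cref{main}, \Cref{1coro} and \Cref{2coro} by letting $t\to\infty$. The point is that $\alpha^{(t)}(a,b)\to\alpha(a,b)$ $p$-adically, and the exponent $s$ together with $k_r^{(s)}$ stays fixed throughout. The sums in \Cref{main} only change through the top argument $\alpha^{(t)}(a,b)-i$. So what we need is that the relevant quantities are $p$-adically continuous in that argument, and that the target sets $\Phi_{p^m}(q)\mathbb{Z}_p[q]$, $[p^n]_q\mathbb{Z}_p[q]$ and $p^n\mathbb{Z}_p$ are closed.

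I would prove part (c) first, since it is the cleanest. For a fixed integer $K=k_r^{(s)}/2$, the map $x\mapsto{{x}\choose{K}}=x(x-1)\cdots(x-K+1)/K!$ is a polynomial in $x$ with rational coefficients. It is therefore continuous on $\mathbb{Z}_p$, and it maps $\mathbb{Z}_p$ into $\mathbb{Z}_p$ because $\mathbb{Z}$ is dense in $\mathbb{Z}_p$. Hence each summand ${{k_r^{(s)}/2}\choose{i}}{{\alpha^{(t)}(a,b)-i}\choose{k_r^{(s)}/2}}$ converges to the corresponding summand with $\alpha(a,b)$ as $t\to\infty$. By \Cref{2coro} every finite sum lies in $p^n\mathbb{Z}$. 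Since $p^n\mathbb{Z}_p$ is closed in $\mathbb{Z}_p$, the limit lies in $p^n\mathbb{Z}_p$.

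For parts (a) and (b), the first task is to say what $\sqbinom{\alpha(a,b)-i}{K}_q$ means, and I expect this to be the main obstacle. The expression $q^{\alpha}$ has no literal meaning in $\mathbb{Z}_p[q]$, and the coefficients of $\sqbinom{\alpha^{(t)}-i}{K}_q$ do not converge as $t$ grows, because the exponents keep growing. I would instead work in the quotient $\mathbb{Z}_p[q]/\Phi_{p^m}(q)$, where $q^{p^m}=1$, and use the factorization from \Cref{Lucas qbinomial} as in \Cref{eq: breakuppt2}:
\begin{equation*}
\sqbinom{\alpha^{(t)}-i}{K}_q\equiv{{T_m(\alpha^{(t)}-i)}\choose{T_m(K)}}\sqbinom{T_0^m(\alpha^{(t)}-i)}{T_0^m(K)}_q \mod \Phi_{p^m}.
\end{equation*}
For $t$ larger than $m$ plus the number of digits of $i$, the truncation $T_0^m(\alpha^{(t)}-i)=T_0^m(\alpha-i)$ is independent of $t$. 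Meanwhile $T_m(\alpha^{(t)}-i)\to T_m(\alpha-i)$ $p$-adically, so the ordinary binomial factor converges by the continuity argument above. I would therefore define $\sqbinom{\alpha-i}{K}_q$ modulo $\Phi_{p^m}$ by the same right-hand side with $\alpha$ in place of $\alpha^{(t)}$, which is an element of $\mathbb{Z}_p[q]/\Phi_{p^m}$. With this definition, the class of the sum in (a) is the $p$-adic limit of the classes of the sums with $\alpha^{(t)}$. Each of those classes vanishes by \Cref{main}. Since $\Phi_{p^m}$ is monic, $\mathbb{Z}_p[q]/\Phi_{p^m}$ is a free $\mathbb{Z}_p$-module of finite rank with its product topology, so the limit is $0$. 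This gives (a).

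For (b), I would combine the congruences modulo $\Phi_{p^m}$ for $1\le m\le n$. The polynomials $\Phi_{p^m}$ are pairwise coprime over $\mathbb{Q}_p$, and their product is $[p^n]_q$. I would either repeat the limit argument in $\mathbb{Z}_p[q]/[p^n]_q$, again a finite free $\mathbb{Z}_p$-module because $[p^n]_q$ is monic, or argue directly as in \Cref{1coro}. Care is needed here: the comaximality of the $\Phi_{p^m}$ fails over $\mathbb{Z}_p$, so the step "divisible by each $\Phi_{p^m}$ implies divisible by the product" should be justified by Gauss's lemma for monic factors rather than by CRT. Finally, (c) can also be recovered from (b) by specializing $q\to1$ as in \Cref{2coro}, which gives a consistency check with the direct argument above.
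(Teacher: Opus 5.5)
Your overall strategy is the paper's: fix $s$, let $t\to\infty$ in \Cref{main}, \Cref{1coro} and \Cref{2coro}, and pass to the limit using continuity in the top argument together with closedness of the target. Your part (c) is exactly the paper's argument.

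The difference is in how (a) is justified. The paper treats $\sqbinom{\alpha(a,b)-i}{K}_q$ as if it were an element of $\mathbb{Z}_p[q]$, and cites Conrad for continuity of $x\mapsto\sqbinom{x}{K}_q$ over $\mathbb{Q}_p[\zeta_{p^m}]$. You note, correctly, that the object only has meaning modulo $\Phi_{p^m}$, and you define it there through the $q$-Lucas factorization. The factor $\sqbinom{T_0^m(\alpha-i)}{T_0^m(K)}_q$ is then eventually constant in $t$, so continuity reduces to that of an ordinary binomial polynomial in $T_m(\alpha^{(t)}-i)$.

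What your route buys:
\begin{itemize}
\item It is self-contained, since it uses only the $q$-Lucas theorem already in the paper.
\item Your definition agrees with the unique continuous extension from the non-negative integers, so you prove the intended statement.
\item It avoids evaluating the product formula at $q=\zeta_{p^m}$ when $K\ge p^m$. There the denominator $(q;q)_K$ vanishes, and the paper's appeal to continuity does not discuss this.
\end{itemize}
The paper's route is shorter, but it rests on an external result whose applicability at roots of unity is left implicit.

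One step in (b) still needs to be assembled. You define $\sqbinom{\alpha-i}{K}_q$ only modulo each $\Phi_{p^m}$ separately. So you do not yet have an element of $\mathbb{Z}_p[q]$, or of $\mathbb{Z}_p[q]/[p^n]_q$, to which the monic-divisibility argument of \Cref{1coro} can be applied. Likewise, repeating the limit argument modulo $[p^n]_q$ requires the classes of $\sqbinom{\alpha^{(t)}-i}{K}_q$ to converge in $\mathbb{Z}_p[q]/[p^n]_q$, and the $q$-Lucas theorem does not give this directly.

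Your own ingredients close the gap:
\begin{itemize}
\item Your divisibility argument shows that the natural map $\mathbb{Z}_p[q]/[p^n]_q\to\prod_{m=1}^n\mathbb{Z}_p[q]/\Phi_{p^m}(q)$ is injective.
\item Both sides are free $\mathbb{Z}_p$-modules of rank $p^n-1=\sum_{m=1}^n\varphi(p^m)$, so the map is a homeomorphism onto a closed sublattice of finite index.
\item Hence the classes modulo $[p^n]_q$ converge, because every component does. Their limit is what the object in (b) must mean.
\end{itemize}
With that in place, (b) follows either from \Cref{1coro} and closedness, or directly from (a) by injectivity.
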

\begin{remark}
    There is a subtle change here, originally for finite truncations we were fine staying completely in $\mathbb{Z}$ and $\mathbb{Z}[q]$, but now it's worth considering that if $x\in \mathbb{Z}_p$ and $k$ is a positive integer, then ${{x}\choose{k}}\in \mathbb{Z}_p$ and $\sqbinom{x}{k}\in \mathbb{Z}_p[q]$ so we will be working over these rings instead when working with $p$-adic numbers in the binomial coefficients. 
\end{remark}
\begin{proof}(a)
Fix an integer $s$ and define a sequence $t_j=s+j$ then for the sequence $\{ t_j \}_{j=0}^\infty$ we define another sequence $f_j = \sum_{r=0}^{2^n-1}(-1)^r \sqbinom{k_r^{(s)}/2}{i}_q\sqbinom{\alpha^{(t_j)}(a,b)-i}{k_r^{(s)}/2}_q$ where it is clear by \Cref{main} that $f_j=0$ over $\mathbb{Z}[q]/\Phi_{p^m}(q)$ for all $t_j$. It follows by \cite[pp.~200--201]{CONRAD2000185} that we have continuity $x\mapsto \sqbinom{x}{n}_q$ over extensions of $\mathbb{Q}_p$ hence over $\mathbb{Q}_p[\zeta _{p^m}]$ (where $\zeta _{p^m}$ is the $p^m$-th primitive root of unity) thus the following holds over $\mathbb{Z}_p[q]/\Phi_{p^m}(q)$ with $1\leq m\leq n$

$$0=\lim_{j\rightarrow \infty}f_j=\sum_{r=0}^{2^n-1}(-1)^r \sqbinom{k_r^{(s)}/2}{i}_q\sqbinom{\lim\limits_{j\rightarrow \infty} \alpha^{(t_j)}(a,b)-i}{k_r^{(s)}/2}_q=\sum_{r=0}^{2^n-1}(-1)^r \sqbinom{k_r^{(s)}/2}{i}_q\sqbinom{\alpha(a,b)-i}{k_r^{(s)}/2}_q$$
\end{proof}
\begin{proof}(b) By part $(a)$
$$\sum_{r=0}^{2^n-1}(-1)^r \sqbinom{k_r/2}{i}_q\sqbinom{\alpha(a,b)-i}{k_r/2}_q\in \Phi _{p^m}(q)\mathbb{Z}_p[q]$$ for $1\leq m\leq n$, it follows immediately that $$\sum_{r=0}^{2^n-1}(-1)^r \sqbinom{k_r/2}{i}_q\sqbinom{\alpha(a,b)-i}{k_r/2}_q\in (\prod_{m=1}^n\Phi _{p^k}(q))\mathbb{Z}_p[q]$$ and this yields the desired result as $\prod_{i=1}^n\Phi _{p^k}(q)=[p^n]_q$. 
\end{proof}

\begin{proof}(c)
As before we define the sequence $t_j=s+j$ so that we may then define the sequence $h_j = \sum_{r=0}^{2^n-1}(-1)^r {{k_r^{(s)}/2}\choose{i}}{{\alpha^{(t_j)}(a,b)-i}\choose{k_r^{(m)}/2}}$ so that $h_j=0$ by \Cref{2coro} in $\mathbb{Z}/p^n\mathbb{Z}$ hence by continuity of $x\mapsto {{x}\choose{k}}$ in $\mathbb{Z}_p$ (\cite[p.~172]{RobertPadic}) we can conclude that the following holds over $\mathbb{Z}_p/p^n\mathbb{Z}_p$
$$0 = \lim\limits_{j\rightarrow \infty}h_j=\sum_{r=0}^{2^n-1}(-1)^r{{k_r^{(s)}/2}\choose{i}} {{\lim\limits_{j\rightarrow\infty } \alpha ^{t_j}(a,b)-i}\choose{k_r^{(s)}/2}} = \sum_{r=0}^{2^n-1}(-1)^r{{k_r^{(s)}/2}\choose{i}} {{\alpha (a,b)-i}\choose{k_r^{(s)}/2}}.$$
\end{proof}
\begin{corollary}\label[corollary]{when i=0} We keep the same assumptions from before. The following hold
    \begin{itemize}
        \item[(a)] $\sum_{r=0}^{2^n-1}(-1)^r \sqbinom{\alpha(a,b)}{k_r^{(s)}/2}_q\equiv 0 \mod \Phi _{p^m}(q)$ for $1\leq m\leq n$ in $\mathbb{Z}_p[q]$
        \item[(b)] $\sum_{r=0}^{2^n-1}(-1)^r \sqbinom{\alpha(a,b)}{k_r^{(s)}/2}_q\equiv 0 \mod [p^n]_q$ in $\mathbb{Z}_p[q]$
        \item[(c)] $\sum_{r=0}^{2^n-1}(-1)^r {{\alpha(a,b)}\choose{k_r^{(s)}/2}}\equiv 0\mod p^n$ in $\mathbb{Z}_p$
    \end{itemize}
\end{corollary}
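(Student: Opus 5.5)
This is the $i=0$ specialization of \Cref{3coro}. With $i=0$ we have $\sqbinom{k_r^{(s)}/2}{0}_q=1$ and $\binom{k_r^{(s)}/2}{0}=1$. The factor $\alpha(a,b)-i$ becomes $\alpha(a,b)$. So each summand in \Cref{3coro}(a)--(c) reduces to $(-1)^r\sqbinom{\alpha(a,b)}{k_r^{(s)}/2}_q$, respectively $(-1)^r\binom{\alpha(a,b)}{k_r^{(s)}/2}$. The plan is to substitute $i=0$ into each part of \Cref{3coro} and check that nothing in the hypotheses excludes this value.

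First, I would confirm that $i=0$ is allowed. \Cref{main} takes $i$ to be an arbitrary non-negative integer, and the proof of \Cref{main} never uses $i>0$. In the Claim, the case split on the carry $\epsilon_m$ is trivial when $i=0$, since $T_0^m(i)=0$ and there is no carry. Then \Cref{binomial fact1} holds with $i=0$ as the identity $\sqbinom{A}{B}_q=\sqbinom{A}{A-B}_q$. The $D_m$-pairing argument does not depend on $i$ at all. So \Cref{main}, \Cref{1coro}, \Cref{2coro} and \Cref{3coro} all hold for $i=0$.

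Next, I would apply \Cref{3coro}(a) with $i=0$ to get part (a) modulo $\Phi_{p^m}(q)$ for each $1\leq m\leq n$ in $\mathbb{Z}_p[q]$. For part (b) I would either quote \Cref{3coro}(b) directly, or deduce it from (a) using $\prod_{m=1}^n\Phi_{p^m}(q)=[p^n]_q$ and the fact that distinct cyclotomic polynomials are coprime. Part (c) is \Cref{3coro}(c) with $i=0$. As an alternative route to (c), set $q=1$ in (b): write the sum as $f(q)[p^n]_q$ with $f\in\mathbb{Z}_p[q]$ and evaluate at $q=1$, where $[p^n]_1=p^n$, exactly as in \Cref{2coro}.

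I expect the only delicate point to be the meaning of $\sqbinom{\alpha(a,b)}{k}_q$ for a $p$-adic upper argument, and the limit passage in \Cref{3coro} that defines it. Since this corollary inherits that passage verbatim, it needs no new work. If a self-contained check were wanted, I would redo the continuity argument of \Cref{3coro}(a) with $i=0$, passing $\alpha^{(t_j)}(a,b)\to\alpha(a,b)$ in $\mathbb{Z}_p[q]/\Phi_{p^m}(q)$ and using \Cref{main} for each truncation.
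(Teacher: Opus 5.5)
Your proposal is correct and matches the paper, whose proof simply sets $i=0$ in parts (a)--(c) of \Cref{3coro}, using $\sqbinom{k_r^{(s)}/2}{0}_q=\binom{k_r^{(s)}/2}{0}=1$; your check that nothing in \Cref{main} excludes $i=0$ is a harmless addition. Only your optional route to (c), setting $q=1$ in (b), needs something extra: you would have to know that the $q$-binomial with $p$-adic top $\alpha(a,b)$, which is in general not a polynomial in $q$, specializes at $q=1$ to $\binom{\alpha(a,b)}{k_r^{(s)}/2}$, and the paper avoids this by getting (c) from \Cref{2coro} together with continuity of $x\mapsto\binom{x}{k}$ on $\mathbb{Z}_p$.
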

\begin{proof}
Part (a) follows from \Cref{3coro}(a) and $i=0$, part (b) follows from \Cref{3coro}(b) and $i=0$, and part (c) follows from \Cref{3coro}(c) and $i=0$. 
\end{proof}

\section{Concluding remarks}
Now we can show how this result generalizes \Cref{ak}. Consider \Cref{3coro}(c) if we set $a=0$ and $b=p-1$, by the definition of $k_r$ and $\alpha (a,b)$ we have $k_r=(p-1)\sum_{i=0}^\infty r_ip^i$ and $\alpha (a,b)=-\frac{1}{2}$ which corresponds with the same definition used in \Cref{ak}, then we have the following results (note that since $a=0$ we have $k_r=k_r^{(s)}$ for all $n\leq s$)

$$\sum_{r=0}^{2^n-1}(-1)^r{{k_r/2}\choose{i}} {{-1/2-i}\choose{k_r/2}}\equiv 0\mod p^n.$$ \par

 As the original supercongruence of \cite{ichino2025representationsmathrmgl2mathbbzpnmathbbzsupercongruences} is related to the representation theory of $\mathrm{GL}_2(\mathbb{Z}/p^n\mathbb{Z})$,  perhaps there is some $q$-analog of this representation theoretic connection. We hope to consider this in future work. 

\printbibliography

\end{document}